\documentclass{amsart}

\usepackage[utf8]{inputenc}
\usepackage{amsmath,amssymb,amsthm}
\usepackage{mathrsfs}
\usepackage{enumerate}

\theoremstyle{definition}
\newtheorem{defi}{Definicja}[section]
\theoremstyle{plain}
\newtheorem{thm}[defi]{Theorem}
\newtheorem{lem}[defi]{Lemma}
\newtheorem{cor}[defi]{Corollary}
\newtheorem{que}[defi]{Question}

\newcommand{\U}{\mathcal{U}}
\newcommand{\pow}[1]{\mathcal{P}\left(#1\right)}
\newcommand{\X}{\mathcal{X}}
\newcommand{\A}{\mathcal{A}}
\newcommand{\I}{\mathcal{I}}

\newcommand{\F}{\mathcal{F}}
\newcommand{\FF}{\mathscr{F}}
\newcommand{\UU}{\mathscr{U}}

\newcommand{\dwa}{\{0,1\}}

\newcommand{\cl}[1]{\overline{#1}}

\newcommand{\frechet}{Fr\'{e}chet}

\renewcommand{\phi}{\varphi}

\begin{document}
\title{A filter on a collection of finite sets and Eberlein compacta}
\thanks{}

\subjclass[2010]{}
\keywords{bisequentiality, Eberlein compactum}

\author{Tomasz Cie\'sla}
\address{Institute of Mathematics, Faculty of Mathematics, Informatics and Mechanics, University of Warsaw}
\email{t.ciesla@mimuw.edu.pl}
\date{\today}

\maketitle

\begin{abstract}
We introduce a $\sigma$-ideal on $\omega_1 \times \omega_1$ and a filter on the collection of graphs of strictly decreasing partial functions on $\omega_1$ taking values in $\omega_1$. We use them to prove that a certain space is a non-bisequential Eberlein compactum.
\end{abstract}

\section{Introduction}

For a subset $A \subset \omega_1 \times \omega_1$ of the set of pairs of countable ordinals, we will denote the vertical and horizontal sections of $A$ at $\alpha$ by
$$A_\alpha = \{\beta \colon (\alpha, \beta) \in A \}, \qquad A^\beta=\{\alpha \colon (\alpha, \beta) \in A\},$$
and we will consider the $\sigma$-ideal
$$\I = \{A \subset \omega_1 \times \omega_1 \colon \textrm{ for all but countably many } \alpha, |A_\alpha| \le \aleph_0 \textrm{ and } |A^\alpha| \le \aleph_0 \}.$$

We will prove the following

\begin{lem} \label{istnieje-funkcja}
For any $A$ in the $\sigma$-ideal $\I$ and subsets $B_1, \ldots, B_n$ of $\omega_1 \times \omega_1$ not in $\I$, there is a strictly decreasing function $f \colon S \to \omega_1$, $S \subset \omega_1$, whose graph omits $A$ and intersects each $B_i$, $i \le n$.
\end{lem}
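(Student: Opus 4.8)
My plan is to prove, by induction on $n$, the following formally stronger statement: for every $A\in\I$, all $B_1,\dots,B_n\notin\I$, and all countable ordinals $\gamma,\delta$, there is a finite antichain $F$ in the coordinatewise order on $\omega_1\times\omega_1$ (i.e.\ a finite set of pairwise incomparable pairs, where $(\alpha,\beta)\le(\alpha',\beta')$ means $\alpha\le\alpha'$ and $\beta\le\beta'$) such that $F\cap A=\emptyset$, $F\cap B_i\ne\emptyset$ for every $i\le n$, and $\alpha>\gamma$, $\beta>\delta$ for every $(\alpha,\beta)\in F$. This yields the lemma, because the graph of a strictly decreasing partial function $S\to\omega_1$ with $S\subseteq\omega_1$ is precisely a finite antichain — finiteness is automatic, since an infinite $S\subseteq\omega_1$ contains an increasing $\omega$-sequence, on which a strictly decreasing map to the ordinals is impossible — so Lemma~\ref{istnieje-funkcja} is the case $\gamma=\delta=0$. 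I will use two trivialities repeatedly: an uncountable subset of $\omega_1$ is cofinal in $\omega_1$, so deleting from it a countable set (a bounded initial segment, or the set $C=\{\alpha:|A_\alpha|>\aleph_0\text{ or }|A^\alpha|>\aleph_0\}$, which is countable since $A\in\I$) leaves an uncountable, hence cofinal, set; and since $B\notin\I$, at least one of $\{\alpha:|B_\alpha|>\aleph_0\}$ and $\{\alpha:|B^\alpha|>\aleph_0\}$ is uncountable, in which case I call $B$ of \emph{type $V$} or \emph{type $H$} (picking one if both hold).

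For the base case $n=1$, if $B_1$ is of type $V$ I choose $\alpha>\gamma$ with $\alpha\notin C$ and $|(B_1)_\alpha|>\aleph_0$; then $(B_1)_\alpha\setminus A_\alpha$ is uncountable (it is $(B_1)_\alpha$ minus the countable set $A_\alpha$), hence cofinal, so it contains some $\beta>\delta$, and $F=\{(\alpha,\beta)\}$ works. If $B_1$ is of type $H$ the argument is symmetric, choosing $\beta$ first.

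For the inductive step, take $B_1,\dots,B_{n+1}\notin\I$ and $\gamma,\delta$, and suppose first that $B_{n+1}$ is of type $V$. Choose $\mu>\gamma$ with $\mu\notin C$ and $|(B_{n+1})_\mu|>\aleph_0$, so $T:=(B_{n+1})_\mu\setminus A_\mu$ is cofinal. Apply the inductive hypothesis to $B_1,\dots,B_n$ with thresholds $\mu$ and $\delta$, obtaining a finite antichain $F'$ disjoint from $A$, meeting each $B_i$ for $i\le n$, with all first coordinates $>\mu$ and all second coordinates $>\delta$. Let $\beta^{*}$ be the largest second coordinate appearing in $F'$ (set $\beta^{*}=\delta$ if $F'=\emptyset$), pick $\beta^{**}\in T$ with $\beta^{**}>\max(\beta^{*},\delta)$, and put $q=(\mu,\beta^{**})$, which lies in $B_{n+1}\setminus A$. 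For every $(\alpha',\beta')\in F'$ we have $\alpha'>\mu=\alpha(q)$ while $\beta'\le\beta^{*}<\beta^{**}=\beta(q)$, so $q$ is incomparable with it; thus $F'\cup\{q\}$ is a finite antichain with the required properties for $B_1,\dots,B_{n+1}$ and $\gamma,\delta$ (with $q$ its leftmost point). If instead $B_{n+1}$ is of type $H$, I run the mirror argument: choose $\nu>\delta$ with $\nu\notin C$ and $|(B_{n+1})^\nu|>\aleph_0$, apply the inductive hypothesis with thresholds $\gamma$ and $\nu$, and adjoin $q=(\alpha^{**},\nu)$ where $\alpha^{**}$ exceeds both $\gamma$ and every first coordinate in $F'$; then $q$ becomes the rightmost point of the new antichain.

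The step I expect to require the most care — and the reason the two thresholds must travel through the induction — is the gluing. A naive induction, producing some antichain for $B_1,\dots,B_n$ and then trying to attach a point of $B_{n+1}$, can fail outright: the antichain returned might be $\{(0,0)\}$, which is comparable with everything. Raising the first-coordinate threshold to $\mu$ (respectively the second to $\nu$) before the recursive call deliberately reserves the slab with first coordinate $\le\mu$ (respectively second coordinate $\le\nu$) as empty space into which a fresh point of $B_{n+1}$ can be slotted at an end of the antichain; being of type $V$ is exactly what guarantees a point of $B_{n+1}\setminus A$ with first coordinate $\mu$ and second coordinate as large as needed (type $H$ gives the dual). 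All remaining verifications are instances of the two cofinality observations above.
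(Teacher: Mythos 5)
Your proof is correct and rests on the same two observations as the paper's own argument (for each $B_i \notin \I$ the set of indices with uncountable vertical or horizontal section is uncountable, hence cofinal, while the sections of $A$ are countable away from a countable exceptional set, so every coordinate can be pushed above any prescribed bound); the paper makes all the choices in one pass, splitting the $B_i$ into ``vertical'' and ``horizontal'' types and ordering the $\beta_i$ and $\gamma_i$ globally, whereas you package the same choices as an induction on $n$ with two threshold parameters. The reformulation of graphs of strictly decreasing partial functions as finite antichains and the explicit handling of the gluing step are correct, so this is essentially the paper's construction in inductive form.
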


We will denote by $[\omega_1 \times \omega_1]^{<\omega}$ the collection of all finite subsets of $\omega_1 \times \omega_1$, endowed with the pointwise topology, i.e., identifying a set with its characteristic function, we will consider  $[\omega_1 \times \omega_1]^{<\omega}$ as the subspace of the Tychonoff product $\dwa^{\omega_1 \times \omega_1}$ consisting of functions with finite support.

Then the space
$$\X = \{A \in [\omega_1 \times \omega_1]^{<\omega} \colon A \textrm{ is the graph of a stricly decreasing function}\}$$
is an Eberlein compactum (the terminology is explained in the next section).

Peter Nyikos announced in \cite{anons-nyikosa} that the compactum $\X$ is not bisequential. However, to our best knowledge, a proof of this result has not been published.

We will show that lemma \ref{istnieje-funkcja} provides readily the following refinement of the Nyikos theorem.

\begin{thm}\label{niebiciagowy}
For any $B \subset \omega_1 \times \omega_1$ not in the $\sigma$-ideal $\I$, the Eberlein compactum $\X_B = \{A \in \X \colon A \subset B\}$ is not bisequential.
\end{thm}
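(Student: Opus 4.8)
The plan is to show that $\X_B$ fails to be bisequential already at the point $\emptyset \in \X_B$ (the graph of the empty function, which clearly lies in $\X_B$), which suffices since bisequentiality is a pointwise property. Recall that $\X_B$ is bisequential at a point $x$ iff every ultrafilter $\U$ on $\X_B$ converging to $x$ has a countable subfamily converging to $x$; intersecting initial segments, one may take that subfamily to be a decreasing sequence $(F_n)$. The basic neighbourhoods of $\emptyset$ in $\X_B$ are the clopen sets $N_F := \{C \in \X_B : C \cap F = \emptyset\}$ with $F \subseteq \omega_1 \times \omega_1$ finite, so a filter on $\X_B$ converges to $\emptyset$ exactly when it contains every such $N_F$; note that $N_F$ makes sense for an arbitrary $F$ and that $N_{A} \cap N_{A'} = N_{A \cup A'}$.

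First I would fix a suitable ultrafilter. Besides the sets $N_A$ with $A \in \I$, consider, for each $B' \subseteq B$ with $B' \notin \I$, the set $H_{B'} := \{C \in \X_B : C \cap B' \neq \emptyset\}$. The crucial point — and the only place Lemma~\ref{istnieje-funkcja} is used — is that the family $\{N_A : A \in \I\} \cup \{H_{B'} : B' \subseteq B,\ B' \notin \I\}$ has the finite intersection property. Indeed, using $N_A \cap N_{A'} = N_{A \cup A'}$ and closure of $\I$ under finite unions, a typical finite intersection takes the form $N_A \cap H_{B_1} \cap \dots \cap H_{B_n}$ with $A \in \I$ and $B_1, \dots, B_n \subseteq B$ not in $\I$; applying Lemma~\ref{istnieje-funkcja} to $A$ and $B_1, \dots, B_n$ yields a strictly decreasing $f$ whose graph omits $A$ and meets each $B_i$, and choosing $p_i \in \mathrm{graph}(f) \cap B_i$ the finite set $\{p_1, \dots, p_n\}$ is a subset of the graph of $f$, hence itself the graph of a strictly decreasing function, is contained in $B$, is disjoint from $A$, and meets each $B_i$ — so it lies in that intersection. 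Let $\U$ be an ultrafilter on $\X_B$ extending the filter generated by this family. Since every finite $F$ lies in $\I$, all the sets $N_F$ belong to $\U$, so $\U$ converges to $\emptyset$.

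Now suppose toward a contradiction that $\X_B$ is bisequential at $\emptyset$, and pick a decreasing sequence $(F_n)$ in $\U$ with $F_n \to \emptyset$. As $H_B = \X_B \setminus \{\emptyset\}$ lies in $\U$, we may replace each $F_n$ by $F_n \setminus \{\emptyset\}$ and assume $\emptyset \notin F_n$. Put $U_n := \bigcup_{C \in F_n} C \subseteq B$; these sets decrease, and $F_n \to \emptyset$ forces $\bigcap_n U_n = \emptyset$ (for each point $q$ we have $F_n \subseteq N_{\{q\}}$ for all large $n$, i.e.\ $q \notin U_n$ for all large $n$). The heart of the matter is the claim that $B \setminus U_n \in \I$ for every $n$: otherwise $H_{B \setminus U_n}$ would be one of the generating sets, hence lie in $\U$, yet every $C \in F_n$ satisfies $C \subseteq U_n$, so $C \cap (B \setminus U_n) = \emptyset$ and therefore $F_n \cap H_{B \setminus U_n} = \emptyset$ — impossible for two members of an ultrafilter. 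Since $\I$ is a $\sigma$-ideal, it follows that $B = B \setminus \bigcap_n U_n = \bigcup_n (B \setminus U_n) \in \I$, contradicting $B \notin \I$. The only genuinely delicate step is the choice of the filter: one must recognize that adjoining the sets $H_{B'}$ for fat $B' \subseteq B$ to the neighbourhood filter of $\emptyset$ is at once harmless — its consistency is precisely Lemma~\ref{istnieje-funkcja} — and fatal to bisequentiality via the $\sigma$-ideal computation just made; everything else is routine.
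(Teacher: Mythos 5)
Your proposal is correct and follows essentially the same route as the paper: you build an ultrafilter extending the neighbourhood filter of $\emptyset$ together with the sets $H_{B'}$ for $B'\subseteq B$ not in $\I$ (the paper's $\F_{\emptyset}(B')$), verify the finite intersection property via Lemma~\ref{istnieje-funkcja}, and then derive the contradiction from the $\sigma$-ideal property --- your sets $B\setminus U_n$ are exactly the paper's $A_i$. The only differences are cosmetic (e.g.\ trimming the graph of $f$ to $\{p_1,\dots,p_n\}$ in the FIP step, which is in fact a slightly more careful justification than the paper's one-line appeal to the lemma).
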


Let us notice that the Eberlein compactum $\X$ was also considered by Leiderman and Sokolov \cite{leiderman}, but in a different context (cf. section \ref{uniform}).

\section{Some background}

A sequence of non-empty subsets $A_0, A_1, \ldots$ of a topological space $X$ converges to a point $x \in X$ if for any neighbourhood $U$ of $x$ there exists an integer $n_0$ such that for any $n>n_0$ we have $A_n \subset U$. An ultrafilter $\U \subset \pow{X}$ is convergent to $x \in X$ if every neighbourhood $U$ of $x$ is an element of $\U$.

We say that a topological space $X$ is bisequential if for any ultrafilter $\U \subset \pow{X}$ convergent to some element $x\in X$ there exists a sequence $U_0 \supset U_1 \supset U_2 \supset \ldots$ of elements of $\U$ convergent to $x$. Equivalently, a topological space is bisequential if and only if it is a biquotient image of metrizable space. This is due to Michael \cite[Theorem 3.D.2]{michael}.

The notion of bisequentiality is closely related to \frechet\ property. We say that a topological space $X$ is \frechet, if for any subset $A \subset X$ and $x \in \cl{A}$ there exists a sequence $x_0, x_1, \ldots$ of elements of $A$ converging to $x$. It is easy to see that every bisequential space is \frechet.

Bisequential spaces enjoy many ``good'' properties: subspaces of bisequential spaces, countable products of bisequential spaces and continuous images of compact bisequential spaces are bisequential. \frechet\ spaces behave much worse --- there exist two compact \frechet\ spaces whose product is not \frechet.

A compact space $K$ is called an Eberlein compactum if it homeomorphically embeds into some Banach space $X$ with its weak topology. If we additionally assume that $X$ is a Hilbert space, we call $K$ an uniform Eberlein compactum. 

It is not hard to see that the space $\X$ described in the introduction is a closed subspace of $\dwa^{\omega_1 \times \omega_1}$. Using the following folklore theorem
\begin{thm}\label{domknieta-rodzina-zbiorow-skonczonych}
Let $X$ be a set and let $\A$ be a family consisting of some finite subsets of $X$. If $K = \{ \chi_A \colon A \in \A \}$ is a closed subspace of $\dwa^X$ then $K$ is an Eberlein compactum.
\end{thm}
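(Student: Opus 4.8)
The plan is to realise $K$ directly as a weakly compact subset of the Banach space $c_0(X)$, which is exactly what the definition of an Eberlein compactum demands. Since every $A \in \A$ is finite, the characteristic function $\chi_A$, viewed as an element of $\dwa^X$, has finite support, and therefore $\chi_A \in c_0(X)$; moreover $\|\chi_A\|_\infty \le 1$, so $K$ is a norm-bounded subset of $c_0(X)$. Thus we already have the set inclusion $K \subset c_0(X)$, and the topology $K$ inherits as a subspace of $\dwa^X$ is precisely the topology of pointwise convergence on $X$. What remains is to check that on $K$ this coincides with the weak topology, and that $K$ is compact.

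The one substantive point, and the step I expect to require the only real care, is the following standard fact: on every norm-bounded subset of $c_0(X)$ the weak topology agrees with the topology of pointwise convergence. Using $c_0(X)^* = \ell_1(X)$, the weak topology is generated by the functionals $x \mapsto \sum_{t \in X} a(t)x(t)$ with $a \in \ell_1(X)$, whereas the pointwise topology is generated by the coordinate evaluations $x \mapsto x(t)$, $t \in X$, which correspond to the finitely supported elements of $\ell_1(X)$; hence the pointwise topology is coarser than the weak one. For the reverse inclusion on a set of norm at most $M$, one fixes $a \in \ell_1(X)$ and $\eps > 0$, chooses a finitely supported $a' \in \ell_1(X)$ with $\|a - a'\|_1 < \eps$, and observes that $\bigl|\sum_t a(t)x(t) - \sum_t a'(t)x(t)\bigr| \le M\eps$ there, while $x \mapsto \sum_t a'(t)x(t)$ is a finite linear combination of coordinate evaluations and so is pointwise continuous. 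Thus each weak functional is, on the bounded set, a uniform limit of pointwise-continuous functions, hence itself pointwise continuous, which gives that the weak topology is coarser than the pointwise topology on bounded sets. Consequently the two topologies agree on $K$.

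Granting this, the theorem will follow immediately. By hypothesis $K = \{\chi_A \colon A \in \A\}$ is closed in $\dwa^X$, which is compact by Tychonoff's theorem, so $K$ is compact in the pointwise topology; by the previous paragraph $K$ is then compact in the weak topology of $c_0(X)$, and the inclusion $K \hookrightarrow (c_0(X), \textrm{weak})$ is a homeomorphic embedding because the two topologies on $K$ coincide. A weakly compact subset of a Banach space is an Eberlein compactum by definition, so $K$ is an Eberlein compactum. (Alternatively, one could quote the Amir--Lindenstrauss description of Eberlein compacta as the pointwise-compact subsets of $c_0(\Gamma)$, but the direct verification above is short enough that I would prefer to include it; the bounded-set identification of the two topologies is the only non-routine ingredient in either approach.)
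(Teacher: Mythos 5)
Your proof is correct. Note that the paper itself offers no proof of this statement at all --- it is quoted as a ``folklore theorem'' and used without justification --- so there is no argument of the author's to compare yours against; what you have written is precisely the standard folklore argument, carried out in full. The two ingredients you isolate are the right ones: (i) finiteness of each $A \in \A$ puts $K$ inside the unit ball of $c_0(X)$, and (ii) on norm-bounded subsets of $c_0(X)$ the weak topology coincides with the topology of pointwise convergence, via $c_0(X)^* = \ell_1(X)$ and approximation of an $\ell_1$-functional by finitely supported ones with the error controlled uniformly on the bounded set. Combined with the hypothesis that $K$ is closed in the compact space $\dwa^X$, this exhibits $K$ as a weakly compact subset of the Banach space $c_0(X)$, which is exactly the paper's definition of an Eberlein compactum. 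The only point worth spelling out a little more, if you wanted to be fully pedantic, is the final step of (ii): each functional $x \mapsto \sum_t a(t)x(t)$ being pointwise continuous on the bounded set means the identity from $(K,\textrm{pointwise})$ to $(K,\textrm{weak})$ is continuous, since the weak topology is the initial topology induced by these functionals; together with the reverse (trivial) inclusion this gives the homeomorphism you need. Your parenthetical alternative via the Amir--Lindenstrauss characterization would also work, but your direct verification is self-contained and preferable here.
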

we deduce that $\X$ is an Eberlein compactum.

Note that $\X$ serves as an example of non-bisequential \frechet\ space --- this is because Eberlein compacta are \frechet\ (see e.g. \cite{archangielski}).

\section{Proof of lemma \ref{istnieje-funkcja}}

Since $A \in \I$, there exists an ordinal $\alpha < \omega_1$ such that 
$$\forall \alpha < \beta < \omega_1 \ \ |A_\beta|<\aleph_1 \wedge |A^\beta|<\aleph_1.$$
For any $i \in \{1, \ldots, n\}$ $B_i \in \pow{\omega_1 \times \omega_1} \setminus \I$, therefore there exist uncountably many ordinals $\beta < \omega_1$ such that
$$|(B_i)_{\beta}| = \aleph_1 \textrm{ or } |(B_i)^\beta| = \aleph_1.$$
Without loss of generality we can assume that $|(B_i)_{\beta}| = \aleph_1$ for $1 \le i \le k$ and $|(B_i)^\beta| = \aleph_1$ for $k+1 \le i \le n$.
Therefore there exist ordinals $\beta_1, \beta_2, \ldots, \beta_k$ such that:
\begin{itemize}
\item $\alpha < \beta_1 < \beta_2 < \ldots < \beta_k$,
\item the sets $(B_i)_{\beta_i}$ are uncountable for $1 \le i \le k$.
\end{itemize}
Analogously, there exist ordinals $\gamma_{k+1}, \gamma_{k+2}, \ldots, \gamma_n$ such that
\begin{itemize}
\item $\alpha < \gamma_n < \gamma_{n-1} < \ldots < \gamma_{k+1}$,
\item the sets $(B_i)^{\gamma_i}$ are uncountable for $k+1 \le i \le n$.
\end{itemize}
Thus there exist ordinals $\gamma_1, \gamma_2, \ldots, \gamma_k$ such that:
\begin{itemize}
\item $\gamma_{k+1} < \gamma_k < \gamma_{k-1} < \ldots < \gamma_1$,
\item $\gamma_i \in (B_i)_{\beta_i}$ for each $1 \le i \le k$,
\item $\gamma_i > \sup A_{\beta_i}$ for each $1 \le i \le k$.
\end{itemize} 
Analogously, there exist ordinals $\beta_{k+1}, \beta_{k+2}, \ldots, \beta_n$ such that
\begin{itemize}
\item $\beta_k < \beta_{k+1} < \beta_{k+2} < \ldots < \beta_n$,
\item $\beta_i \in (B_i)^{\gamma_i}$ for each $k+1 \le i \le n$,
\item $\beta_i > \sup A^{\gamma_i}$ for each $k+1 \le i \le n$.
\end{itemize}

It follows that $(\beta_i, \gamma_i) \in B_i$ and $(\beta_i, \gamma_i) \notin A$ for any $1 \le i \le n$. Moreover
$$\beta_1 < \beta_2 < \ldots < \beta_n \textrm{ and } \gamma_1 > \gamma_2 > \ldots > \gamma_n.$$

Therefore the function $f \colon \{\beta_1, \ldots, \beta_n\} \to \omega_1$ given by the formula $f(\beta_i) = \gamma_i$ has the desired properties.

\section{Proof of theorem \ref{niebiciagowy}}
Fix $B \in \pow{\omega_1 \times \omega_1} \setminus \I$. It is clear that $\X_B$ is closed subset of $\X$, and therefore is an Eberlein compactum.

For any $A, B_1, \ldots, B_n \subset B$ define
$$\F_A=\left\{C \in \X_B \colon A \cap C = \emptyset\right\}$$
and 
$$\F_A(B_1, B_2, \ldots, B_n) = \left\{C \in \X_B \colon A \cap C = \emptyset \wedge \forall i \le n \ \ B_i \cap C \neq \emptyset \right\}.$$
Let $\I_B$ be the $\sigma$-ideal $\I_B = \{A \in \I \colon A \subset B\}$. Consider
$$\FF = \{\F_A \colon A \in \I_B\} \cup \left\{\F_A(B_1, B_2, \ldots, B_n) \colon A \in \I_B \wedge n \in \omega \wedge \forall i \le n \ \  B_i \in \pow{B} \setminus \I_B \right\}.$$

Note that for any $A, A' \in \I_B$ and $B_1, \ldots, B_n, B_1', \ldots, B_m' \in \pow{B} \setminus \I_B$ we have $A \cup A' \in \I_B$ and 
\begin{align*}
\F_A \cap \F_{A'} & = \F_{A \cup A'} \\
\F_A \cap \F_{A'}(B_1, B_2, \ldots, B_n) & = \F_{A \cup A'}(B_1, B_2, \ldots, B_n) \\
\F_A(B_1, B_2, \ldots, B_n) \cap \F_{A'}(B'_1, B'_2, \ldots, B'_m) &= \F_{A\cup A'}(B_1, B_2, \ldots, B_n, B'_1, B'_2, \ldots, B'_m).
\end{align*}

It follows that intersection of any two elements of $\FF$ is an element of $\FF$. By lemma \ref{istnieje-funkcja} we infer that $\FF$ consists of non-empty sets. Therefore $\FF$ has the finite intersection property. Thus there exists an ultrafilter $\UU$ on $\X_B$ extending $\FF$. We will prove that $\UU$ witnesses non-bisequentiality of $\X_B$.

Basic neighbourhoods of $\emptyset \in \X_B$ are of the form $\F_A$, where $A$ are finite subsets of $B$ and therefore are elements of $\FF$. In particular, every neighbourhood of $\emptyset$ is an element of $\UU$ which means that $\UU$ converges to $\emptyset$.

For the sake of contradiction assume that $\X_B$ is bisequential. Then there exists a decreasing sequence $\U_0, \U_1, \ldots$ of elements of $\UU$ such that for any basic neighbourhood $\F_A$ of $\emptyset$ there exists a positive integer $i$ such that $\U_i \subset \F_A$. 

Define for any $i \in \omega$
$$A_i=\left\{(\alpha,\beta) \in B \colon \U_i \subset \F_{\{(\alpha, \beta)\}} \right\}.$$
Then $\bigcup_{i\in\omega}A_i = B$. Moreover
$$\U_i \subset \bigcap_{(\alpha,\beta)\in A_i} \F_{\{(\alpha,\beta)\}} = \F_{A_i}.$$
Thus $\F_{A_i} \in \UU$.

Suppose that $A_i \notin \I_B$ for some $i \in \omega$. Then
$$\X \setminus \F_{A_i} = \F_{\emptyset}(A_i) \in \FF \subset \UU,$$
which is a contradiction as $\UU$ is an ultrafilter and $\F_{A_i} \in \UU$. Therefore $A_i \in \I_B$ for any $i \in \omega$. 

Note that
$$\bigcup_{i \in \omega} A_i \in \I_B$$
because $\I_B$ is a $\sigma$-ideal. On the other hand
$$\bigcup_{i \in \omega} A_i = \omega_1 \times \omega_1,$$
which is a contradiction. Thus $\X_B$ is non-bisequential.

\section{Uniform Eberlein compacta} \label{uniform}

We already mentioned that the space $\X$ was also studied by Leiderman and Sokolov. They proved that $\X$ is an Eberlein compactum which isn't a uniform Eberlein compactum. In this section we present another proof of this fact using non-bisequentiality of $\X$.

We will need a few other results. 

Benyamini, Rudin and Wage gave the following characterization of uniform Eberlein compacta \cite[Lemma 1.2]{brw}:

\begin{thm}\label{benyamini}
Every uniform Eberlein compactum $K$ is a continuous image of a closed subspace of $\alpha(\lambda)^\omega$ for some cardinal $\lambda$, where $\alpha(\lambda)$ is the one-point compactification of a discrete space of cardinality $\lambda$. 
\end{thm}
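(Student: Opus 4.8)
The plan is to reduce Theorem~\ref{benyamini} to one concrete compact space and then write down an explicit surjection onto it. First I would reduce to a coordinate ball. By definition, a uniform Eberlein compactum $K$ embeds, with its weak topology, into a Hilbert space $\ell^2(\Gamma)$; since $K$ is then norm bounded, rescaling the embedding puts $K$ inside the closed unit ball, and since the coordinate functionals $\{e_\gamma\}_{\gamma\in\Gamma}$ are total, the weak topology on this bounded set coincides with the topology of coordinatewise convergence. Thus, putting $\lambda=\max(|\Gamma|,\aleph_0)$, the space $K$ is homeomorphic to a closed subspace of
\[
\Sigma_\Gamma=\bigl\{x\in[-1,1]^\Gamma\colon \textstyle\sum_{\gamma\in F}x_\gamma^2\le 1\text{ for every finite }F\subset\Gamma\bigr\},
\]
which is a closed, hence compact, subspace of $[-1,1]^\Gamma$. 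The class of continuous images of closed subspaces of $\alpha(\lambda)^\omega$ is stable under passing to closed subspaces (pull the closed set back along the surjection), so it is enough to realise $\Sigma_\Gamma$ itself as such an image.

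Next comes the combinatorial input and the encoding. For $x\in\Sigma_\Gamma$ and $n\ge1$ the set $S_n(x)=\{\gamma\colon|x_\gamma|\ge 1/n\}$ satisfies $n^{-2}|S_n(x)|\le\sum_{\gamma\in S_n(x)}x_\gamma^2\le1$, hence $|S_n(x)|\le n^2$. Fix a well-ordering of $\Gamma$ and let $F_n=\{-n^2,\dots,n^2\}$, regarded as a finite clopen subspace of $\alpha(\lambda)$. In the compact space $P=\prod_{n\ge1}(\alpha(\lambda)\times F_n)^{n^2}$ --- which, being a countable product of closed subspaces of $\alpha(\lambda)$, is a closed subspace of $\alpha(\lambda)^\omega$ --- I would define a (not necessarily continuous) map $\Psi\colon\Sigma_\Gamma\to P$ by recording, in the block indexed by $n$, the increasing enumeration $\gamma_{n,1}<\dots<\gamma_{n,j}$ of $S_n(x)$ padded with the point at infinity to length $n^2$, together with the integers $m_{n,i}=\lfloor n^2 x_{\gamma_{n,i}}\rfloor$ (and $0$ on padded positions). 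Then set $L=\overline{\Psi(\Sigma_\Gamma)}\subset P$, a closed subspace of $\alpha(\lambda)^\omega$.

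For $z\in L$ with block-$n$ data $((\bar\gamma_{n,i},\bar m_{n,i}))_{i\le n^2}$ and $\gamma\in\Gamma$, put $t_n(\gamma)(z)=\bar m_{n,i}/n^2$ when $\bar\gamma_{n,i}=\gamma$ for a unique $i$ and $t_n(\gamma)(z)=0$ otherwise, and $\rho(z)_\gamma=\lim_n t_n(\gamma)(z)$. On $\Psi(\Sigma_\Gamma)$ one has $|t_n(\gamma)(\Psi(x))-x_\gamma|<1/n$, so $\rho\circ\Psi=\mathrm{id}_{\Sigma_\Gamma}$; hence once $\rho$ is shown to be well defined and to map $L$ into $\Sigma_\Gamma$, it is automatically a continuous surjection onto $\Sigma_\Gamma$, and the reduction of the first paragraph finishes the proof. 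The remaining verifications are clopen-condition bookkeeping: (i) for $\gamma\ne\infty$ the set $\{\bar\gamma_{n,i}=\gamma\}$ is clopen in $P$ and $\bar m_{n,i}$ is locally constant, so each $t_n(\gamma)$ is continuous on $L$; (ii) the estimate $|t_n(\gamma)(z)-t_m(\gamma)(z)|<1/n+1/m$ holds on $\Psi(\Sigma_\Gamma)$ and is a clopen condition, hence holds on $L$, so $(t_n(\gamma))_n$ converges uniformly in $z$ and $\rho\colon L\to[-1,1]^\Gamma$ is continuous; (iii) for finite $G\subset\Gamma$ the inequality $\sum_{\gamma\in G}t_n(\gamma)(z)^2\le 1+2/n+1/n^2$ holds on $\Psi(\Sigma_\Gamma)$ and is clopen, hence holds on $L$, and letting $n\to\infty$ and then $G$ grow gives $\sum_\gamma\rho(z)_\gamma^2\le1$, i.e.\ $\rho(z)\in\Sigma_\Gamma$.

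The one step I expect to be a genuine obstacle is the well-definedness underlying (i): that no $z\in L$ has $\bar\gamma_{n,i}=\bar\gamma_{n,i'}=\gamma\ne\infty$ with $i\ne i'$. This is \emph{not} a clopen condition on $P$ --- the diagonal of $\alpha(\lambda)\times\alpha(\lambda)$ is closed but not open --- so it does not obviously survive passing to the closure. It does survive: if a net in $\Sigma_\Gamma$ had its $\Psi$-image converging to such a $z$, then since $\{\gamma\}$ is open in $\alpha(\lambda)$ we would eventually have both $\gamma_{n,i}=\gamma$ and $\gamma_{n,i'}=\gamma$ along that net, contradicting injectivity of the enumeration of the corresponding $S_n$. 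Turning this observation (together with the parallel sanity checks on the structure of the points of $L$) into a clean argument is the technical heart of the proof; once it is in place, everything else is routine.
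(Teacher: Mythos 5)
The paper does not prove this statement at all --- it is quoted verbatim from Benyamini--Rudin--Wage \cite[Lemma 1.2]{brw} --- so there is no in-paper argument to compare against. Judged on its own, your proof is correct, and it is essentially a reconstruction of the classical BRW argument: reduce to the pointwise-topology unit ball $\Sigma_\Gamma$ of $\ell^2(\Gamma)$ using the $|S_n(x)|\le n^2$ estimate, encode the $n$-th level sets together with discretized values into a countable product of copies of $\alpha(\lambda)$, and recover the point by a coordinatewise limit that is continuous because the relevant bounds are clopen conditions surviving passage to the closure $L$. Two small remarks. First, $\alpha(\lambda)\times F_n$ is not a closed subspace of $\alpha(\lambda)$ (it has $2n^2+1$ accumulation points), but since $F_n$ is finite discrete and $\lambda\ge\aleph_0$ it is a closed subspace of $\alpha(\lambda)^2$, and $P$ is therefore still a closed subspace of $\alpha(\lambda)^{\sum_n 2n^2}\cong\alpha(\lambda)^\omega$; the conclusion you need is unaffected. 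Second, the issue you flag as the ``technical heart'' is actually vacuous in your own setup: because you default $t_n(\gamma)(z)$ to $0$ unless $\bar\gamma_{n,i}=\gamma$ for a \emph{unique} $i$, and the uniqueness locus is a finite Boolean combination of clopen sets, each $t_n(\gamma)$ is locally constant with finitely many values on all of $P$, and every estimate you use ((ii), (iii), and $|t_n(\gamma)|\le 1$) is a clopen condition verified on $\Psi(\Sigma_\Gamma)$ and hence inherited by $L$; non-degeneracy of the points of $L$ is never actually invoked. (Your argument that degenerate points do not occur in $L$ is nevertheless correct.) The only computation left implicit is the Cauchy--Schwarz step $\sum_{\gamma\in S_n(x)}|x_\gamma|\le n$ behind the bound $1+2/n+1/n^2$ in (iii), which is routine. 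With $\lambda=\max(|\Gamma|,\aleph_0)$ this proves the statement as given; getting the sharper ``$\lambda=$ weight of $K$'' mentioned after the theorem would only require choosing $\Gamma$ of cardinality $w(K)$ at the outset.
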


Moreover, one can take as $\lambda$ the weight of $K$.

The following theorem is a well-known fact proved by Nyikos.

\begin{thm}\label{nyikos-ciezar}
If $K$ is a uniform Eberlein compactum whose weight is smaller than the first measurable cardinal, then $K$ is bisequential.
\end{thm}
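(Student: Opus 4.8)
The plan is to deduce bisequentiality of $K$ entirely from Theorem \ref{benyamini} together with the permanence properties of bisequential spaces recalled in Section 2, after isolating the single point where the cardinal hypothesis is used. Taking $\lambda = w(K)$ equal to the weight of $K$, Theorem \ref{benyamini} presents $K$ as a continuous image of a closed subspace $L$ of $\alpha(\lambda)^\omega$, and by hypothesis $\lambda$ is smaller than the first measurable cardinal. Since bisequentiality passes to subspaces, to countable products, and to continuous images of compact spaces, it suffices to prove that $\alpha(\lambda)$ is itself bisequential: then $\alpha(\lambda)^\omega$ is bisequential, hence so is its closed (thus compact) subspace $L$, and finally $K$, being a continuous image of the compact bisequential space $L$, is bisequential. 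Thus everything reduces to a single lemma about the one-point compactification.

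So the real content is the claim that $\alpha(\lambda)$ is bisequential whenever $\lambda$ is smaller than the first measurable cardinal. Write $D$ for the underlying discrete space, $|D| = \lambda$, and $\infty$ for the point at infinity, so that the neighbourhoods of $\infty$ are exactly the cofinite subsets of $\alpha(\lambda)$. At an isolated point $d \in D$ bisequentiality is immediate, since $\{d\}$ is a neighbourhood and the only ultrafilter converging to $d$ is principal. I would therefore fix an ultrafilter $\UU$ on $\alpha(\lambda)$ converging to $\infty$, which means precisely that $\UU$ contains every cofinite set. If $\{\infty\} \in \UU$ then $\UU$ is principal and the constant sequence $U_n = \{\infty\}$ already witnesses bisequentiality, so I may assume $D \in \UU$; then the trace $\UU \restriction D$ is a nonprincipal ultrafilter on $D$.

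The key observation is that a decreasing sequence $U_0 \supset U_1 \supset \cdots$ of members of $\UU$ converges to $\infty$ if and only if $\bigcap_{n} U_n \subset \{\infty\}$, because for a decreasing sequence convergence to $\infty$ says exactly that each point of $D$ lies in only finitely many $U_n$. Hence a sequence of the kind demanded by bisequentiality exists if and only if there are sets in $\UU$ contained in $D$ and decreasing to $\emptyset$; writing $D_n = U_n \setminus U_{n+1}$, this happens if and only if some member of $\UU$ splits into countably many pieces none of which lies in $\UU$. A routine complementation argument, applied to a countable family witnessing the failure of countable completeness, shows that such a partition exists precisely when $\UU \restriction D$ is not countably complete. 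Consequently $\alpha(\lambda)$ is bisequential if and only if $D$ carries no nonprincipal countably complete ultrafilter, and by Ulam's characterization of the first measurable cardinal as the least cardinal supporting such an ultrafilter this is exactly the assumption that $\lambda$ is smaller than the first measurable cardinal.

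I expect the main obstacle to be this last translation rather than the bookkeeping: one must turn the topological condition at $\infty$ into the purely set-theoretic statement on ultrafilters, taking care that the decreasing, nonempty sequence required by the definition of bisequentiality corresponds cleanly to a countable partition of a $\UU$-set into pieces not belonging to $\UU$, and then invoke Ulam's theorem. Once the lemma on $\alpha(\lambda)$ is in hand, the reduction in the first paragraph is purely formal, so the proof of Theorem \ref{nyikos-ciezar} is complete.
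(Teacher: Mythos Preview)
Your proof is correct and follows exactly the paper's reduction: use Theorem~\ref{benyamini} to present $K$ as a continuous image of a closed (hence compact) subspace of $\alpha(\lambda)^\omega$, then apply the permanence properties of bisequentiality. The only difference is that where the paper simply cites \cite[Example~10.15]{michael} for the bisequentiality of $\alpha(\lambda)$ when $\lambda$ is below the first measurable cardinal, you supply a self-contained proof of this fact via the Ulam characterization of measurable cardinals.
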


\begin{proof}
Let $\lambda$ be the weight of $K$. It follows from Theorem \ref{benyamini} that there exists a closed subset $F \subset \alpha(\lambda)^\omega$ and a continuous surjection $f \colon F \to K$.

By assumption, $\lambda$ is smaller than the first measurable cardinal, therefore $\alpha(\lambda)$ is bisequential (see \cite[Example 10.15]{michael}). Thus $\alpha(\lambda)^\omega$ is bisequential as well, because it is a countable product of bisequential spaces. Thus $F$ is bisequential as a subspace of bisequential space.

Since $F$ is a closed subset of a compact space $\alpha(\lambda)^\omega$, it follows that $F$ is compact. Therefore $K=f(F)$ is a continuous image of a compact bisequential space. Thus $K$ is bisequential.
\end{proof}

\begin{cor}
$\X$ is not a uniform Eberlein compactum.
\end{cor}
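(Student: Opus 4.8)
The plan is to derive the corollary directly by combining the non-bisequentiality of $\X$ (which is Theorem \ref{niebiciagowy} applied to $B = \omega_1 \times \omega_1$, since $\omega_1 \times \omega_1 \notin \I$) with the contrapositive of Theorem \ref{nyikos-ciezar}. The chain of reasoning is short: if $\X$ were a uniform Eberlein compactum, then since its weight is $\aleph_1$ — which is certainly below the first measurable cardinal, as measurable cardinals are strongly inaccessible and hence uncountable and much larger than $\aleph_1$ — Theorem \ref{nyikos-ciezar} would force $\X$ to be bisequential, contradicting Theorem \ref{niebiciagowy}.

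First I would observe that $\omega_1 \times \omega_1 \notin \I$: every vertical section $(\omega_1 \times \omega_1)_\alpha = \omega_1$ is uncountable, so the condition ``for all but countably many $\alpha$, $|A_\alpha| \le \aleph_0$'' fails badly. Hence $\X_{\omega_1 \times \omega_1} = \X$, and Theorem \ref{niebiciagowy} tells us $\X$ is not bisequential. Next I would pin down the weight of $\X$: as a closed subspace of $\dwa^{\omega_1 \times \omega_1}$, the space $\X$ has weight at most $|\omega_1 \times \omega_1| = \aleph_1$, and in fact exactly $\aleph_1$ (it is uncountable and second countable spaces are metrizable hence bisequential, so it cannot have countable weight). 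Either way, $w(\X) \le \aleph_1 < $ the first measurable cardinal.

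Then the argument concludes by contradiction: assume $\X$ is a uniform Eberlein compactum. By Theorem \ref{nyikos-ciezar}, applied with $K = \X$ and noting $w(\X) < $ the first measurable cardinal, $\X$ is bisequential. This contradicts the fact, established above via Theorem \ref{niebiciagowy}, that $\X$ is not bisequential. Therefore $\X$ is not a uniform Eberlein compactum.

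I do not expect any serious obstacle here — the corollary is essentially a one-line deduction. The only point requiring a word of care is the bound on the weight of $\X$ and the remark that $\aleph_1$ lies below the first measurable cardinal; both are standard (a measurable cardinal is inaccessible, in particular uncountable and a strong limit, so it is far above $\aleph_1$), and neither needs more than a sentence. All the real work has already been done in proving Lemma \ref{istnieje-funkcja}, Theorem \ref{niebiciagowy}, and the cited Theorems \ref{benyamini} and \ref{nyikos-ciezar}.
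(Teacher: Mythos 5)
Your proposal is correct and follows exactly the paper's own argument: the paper's proof of the corollary is the same one-line deduction from the non-bisequentiality of $\X$ (Theorem \ref{niebiciagowy}) and Theorem \ref{nyikos-ciezar}, using that $w(\X)=\aleph_1$ is below the first measurable cardinal. Your additional remarks verifying $\omega_1\times\omega_1\notin\I$ and bounding the weight are correct elaborations of details the paper leaves implicit.
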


\begin{proof}
$\X$ has weight $\aleph_1$ and is not bisequential. It follows from the previous theorem that $\X$ is not a uniform Eberlein compactum.
\end{proof}

\section{Comments}

One can replace $\omega_1$ by an ordinal number $\alpha$ in the description of the space $\X$. If $\alpha < \omega_1$ then the resulting space is countable, which of course is bisequential. If $\alpha > \omega_1$, then we get a non-bisequential space, because $\X$ embeds homeomorfically into it. Therefore, in a sense, $\X$ is the smallest example of a non-bisequential Eberlein compactum of this type.

We get another modification by replacing $\omega_1$ by a partially ordered set $T$. As a result we get a space which we will denote by $\X_T$. If $T$ is a tree then $\X_T$ is an Eberlein compactum.

Note that if $T$ has a branch of length at least $\omega_1$, then $\X$ can be homeomorphically embedded into $\X_T$ which implies that $\X_T$ isn't bisequential. Of course trees of height greater than $\omega_1$ have such a branch. Two interesting questions arise:

\begin{que}
For which trees $T$ of height less or equal than $\omega_1$ is $\X_T$ bisequential?
\end{que}

\begin{que}
Let $T$ be an Aronszajn tree. Is $\X_T$ bisequential?
\end{que}

The author wasn't able to answer these questions.

\section*{Acknowledgments} 

I'd like to thank Witold Marciszewski for conversations on the topic and many helpful comments. Also thanks are due to Roman Pol and Piotr Zakrzewski for remarks which simplified the proofs and greatly improved the exposition of the results.


\begin{thebibliography}{99}

\addcontentsline{toc}{chapter}{Bibliografia}

\bibitem[Arh]{archangielski} A. V. Arhangelskii, \textit{Eberlein compacta}, Encyclopedia of general topology,
Chapter C-18, eds. K. P. Hart, J. Nagata, J. E. Vaughan, Elsevier, 2004. 

\bibitem[BRW]{brw} Y. Benyamini, M. E. Rudin, M. Wage, \textit{Continuous images of weakly compact subsets of Banach spaces}, Pacific Journal of Mathematics, Vol. 70, No. 2, 1977, 309--324.

\bibitem[LS]{leiderman} A. G. Leiderman, G. A. Sokolov, \textit{Adequate families of sets and Corson compacts}, Commentationes Mathematicae Universitatis Carolinae, Vol. 25 (1984), No. 2, 233--246.

\bibitem[Mich]{michael} E. A. Michael, \textit{A quintuple quotient quest}, General Topology and its Applications 2 (1972) 91--138.

\bibitem[Nyi]{anons-nyikosa} P. Nyikos, \textit{Properties of Eberlein compacta}, Abstracts of Eighth Summer Conference on General Topology and Applications, 1992, 28.

\end{thebibliography}
\end{document}